\documentclass[11pt]{article}
\usepackage[top=1.25in, bottom=1.5in, left=1.5in, right=1.5in]{geometry}
\usepackage{amsmath,amsthm,amssymb}

\newtheorem{theorem}{Theorem}[section]
\newtheorem{corollary}[theorem]{Corollary}
\newtheorem{lemma}[theorem]{Lemma}
\newtheorem{example}[theorem]{Example}
\newtheorem*{remark}{Remark}
\newtheorem*{remarks}{Remarks}
\newtheorem{construction}{Construction}[section]

\renewcommand{\P}{\ensuremath{\mathcal{P}}}
\newcommand{\A}{\ensuremath{\mathcal{A}}}
\newcommand{\B}{\ensuremath{\mathcal{B}}}
\newcommand{\G}{\ensuremath{\mathcal{G}}}
\newcommand{\zed}{\ensuremath{\mathbb{Z}}}

\title{A method of constructing pairwise balanced designs containing parallel classes}
\author{Douglas R.\ Stinson\thanks{D.R.\ Stinson's research is supported by  NSERC discovery grant RGPIN-03882.}\\
David R.\ Cheriton School of Computer Science\\University of Waterloo\\ Waterloo ON, N2L 3G1, Canada}
\date{\today}

\begin{document}

\maketitle

\begin{abstract}
The obvious way to construct a GDD (group-divisible design) recursively is to use {\sc Wilson's Fundamental Construction} for GDDs ({\sc WFC}). Then a PBD (pairwise balanced design) is often obtained by adding a new point to each group of the GDD. However, after constructing such a PBD, it might be the case that we then want to identify a parallel class of blocks. In this short note, we explore some possible ways of doing this.
\end{abstract}

\section{Introduction and Definitions}

We use standard design-theoretic terminology for GDDs (group-divisible designs), PBDs (pairwise balanced designs)  and
transversal designs (TDs). To begin, we recall a few definitions of these kinds of designs from \cite{CD}.

Let $K$ and $L$ be sets of positive integers (we can assume that every element of $K$ is at least two). A \emph{$K$-group-divisible design} (or \emph{$K$-GDD}) with group sizes in $L$  is a triple $(X,\G,\A)$ that satisfies the following properties:
\begin{enumerate}
\item $X$ is a set of  \emph{points}.
\item $\G$ is a partition of $X$ into \emph{groups} such that $|G| \in L$ for all $G \in \G$.
\item $\A$ consists of a set of \emph{blocks} such that 
\begin{enumerate}
\item $|G \cap A| \leq 1$ for all $G \in \G$ and for all $A \in \A$,
\item every pair of points from different groups is contained in exactly one block, and
\item $|A| \in K$ for all $A \in \A$.
\end{enumerate}
\end{enumerate}
If $K = \{k\}$, we write $k$-GDD for simplicity. The \emph{type} of the GDD is the multiset $\{|G|: G \in \G\}$. We usually employ an exponential notation to describe types:
type ${t_1}^{u_1} {t_2}^{u_2}  \dots$ denotes $u_i$ occurrences of $t_i$ for $i = 1,2, \dots$. 

A \emph{TD$(k,m)$} (or \emph{transversal design})is a $k$-GDD of type $m^k$. Thus every block in a transversal design is a transversal of the $k$ groups. It is well-known that a TD$(k,m)$ is equivalent to $k-2$ mutually orthogonal latin squares of order $m$. 

Let $K$ be a set of positive integers. A \emph{$(v,K)$-pairwise balanced design} (or \emph{$(v,K)$-PBD}) is a pair $(X,\A)$ that satisfies the following properties:
\begin{enumerate}
\item $X$ is a set of  \emph{points},
\item $\A$ consists of a set of \emph{blocks} such that 
every pair of points is contained in exactly one block, and
\item $|A| \in K$ for all $A \in \A$.
\end{enumerate}
If $K = \{k\}$, we write $(v,k)$-PBD for simplicity. A $(v,k)$-PBD is also known as a \emph{$(v,k,1)$-balanced incomplete block design} (or \emph{$(v,k,1)$-BIBD}).

If $q$ is a prime or prime power, then there exists a $(q^2,q,1)$-BIBD (an \emph{affine plane} of order $q$) and a $(q^2+q+1,q+1,1)$-BIBD 
(a \emph{projective plane} of order $q$).

\medskip

Before describing our constructions for PBDs containing parallel classes, we recall \textsc{Wilson's Fundamental Construction} for GDDs (which we abbreviate to \textsc{WFC}). We follow the presentation from \cite[\S IV.2.1]{CD}.

\begin{construction}[\textsc{Wilson's Fundamental Construction for GDDs}]
\label{WFC.const}
Suppose that $(X,\G,\A)$ is a GDD and let $w : X \rightarrow \zed^+$ ($w$ is called a \emph{weighting}). For every block $A \in \A$, suppose there is a
$K$-GDD of type $\{ w(x) : x \in A\}$. For all $G \in \G$, define $w_G = \sum_{x \in G} w(x)$. Then there is a $K$-GDD of type
$\{ w_G : G \in \G\}$. 
\end{construction}

\section{Constructions}

We often construct GDDs recursively using  Construction \ref{WFC.const}. Then a PBD can be obtained from the resulting GDD by adding a new point to each group. 
Suppose, after constructing such a PBD, that we then want to identify a parallel class of blocks. Our next theorem provides one way to accomplish this goal.

\begin{theorem}
\label{t1}
Suppose there is a TD$(\ell+1,m)$ and a TD$(\ell,u)$, where $u \leq m$. Suppose $\ell \in K$ and suppose that there is a $K$-GDD of type $u^{\ell} v^1$. 
Finally, suppose there exists an $(mu+1,K)$-PBD. Then there exists a 
$K$-GDD of type $\ell^{mu}{(tv+1)}^1$ for all $t$ such that $0 \leq t \leq m - u$. 
\end{theorem}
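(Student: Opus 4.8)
The plan is to combine \textsc{WFC} with a filling step, using the equivalence between a TD$(\ell+1,m)$ and a resolvable TD$(\ell,m)$. First I would delete one group from the TD$(\ell+1,m)$, obtaining a TD$(\ell,m)$ on a point set $Y$ with groups $G_1,\dots,G_\ell$, whose $m^2$ blocks split into $m$ parallel classes $\Pi_1,\dots,\Pi_m$: for each point $p$ of the deleted group, the $m$ blocks meeting $p$, with $p$ removed, form one such class, and these classes partition the blocks of the TD$(\ell,m)$. Since $u+t\le m$ I may earmark $u$ of these classes as ``type $0$'' and $t$ further ones as ``type $1$''; re-inserting the $t$ points $q_1,\dots,q_t$ of the deleted group that index the type-$1$ classes recovers exactly the $\{\ell,\ell+1\}$-GDD of type $m^\ell t^1$ obtained from the TD$(\ell+1,m)$ by truncating one group to $t$ points.

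Next I would apply \textsc{WFC} to this truncated design, giving weight $u$ to every point of $Y$ and weight $v$ to each $q_i$. Every block of size $\ell$ lies in $Y$ and is inflated by a TD$(\ell,u)$ — a $K$-GDD since $\ell\in K$ — placed on $B\times\{1,\dots,u\}$ with its natural groups $\{b\}\times\{1,\dots,u\}$; every block of size $\ell+1$ meets a unique $q_i$ and is inflated by the hypothesised $K$-GDD of type $u^\ell v^1$, whose group sizes match the weights. The twist is that whenever $B$ lies in a type-$0$ class $\Pi_j$ I would choose the TD$(\ell,u)$ on $B\times\{1,\dots,u\}$ so that the transversal $B\times\{j\}$ is one of its blocks — a TD$(\ell,u)$ can always be taken to contain a prescribed transversal of its groups as a block — and then \emph{delete} that block. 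Identifying the inflated point set with $(Y\times\{1,\dots,u\})\cup(\{q_1,\dots,q_t\}\times\{1,\dots,v\})$, the resulting collection of blocks covers every pair exactly once except for the pairs lying inside one of the $\ell$ sets $G_i\times\{1,\dots,u\}$ (size $mu$), inside $\{q_1,\dots,q_t\}\times\{1,\dots,v\}$ (size $tv$), or inside one of the transversals $B\times\{j\}$ with $j\le u$ and $B\in\Pi_j$. Because the blocks of each $\Pi_j$ partition $Y$, these last transversals partition $Y\times\{1,\dots,u\}$ into $mu$ sets of size $\ell$.

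Finally I would adjoin a new point $\infty$ and, for each $i$, fill $(G_i\times\{1,\dots,u\})\cup\{\infty\}$ with an $(mu+1,K)$-PBD. This covers precisely the pairs inside the $\ell$ sets $G_i\times\{1,\dots,u\}$ together with all pairs through $\infty$ other than those joining $\infty$ to $\{q_1,\dots,q_t\}\times\{1,\dots,v\}$. Hence afterwards the only uncovered pairs are those inside the $mu$ transversals $B\times\{j\}$ and those inside $(\{q_1,\dots,q_t\}\times\{1,\dots,v\})\cup\{\infty\}$, which together form a disjoint union of $mu$ cliques of size $\ell$ and one clique of size $tv+1$. Declaring these cliques to be the groups, and keeping all the blocks produced above (all of size in $K$), gives a $K$-GDD of type $\ell^{mu}(tv+1)^1$.

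The step I expect to require the most care is checking that this really is a GDD: that no pair is covered twice and every cross pair is covered. This should follow from the facts that two blocks of a transversal design share at most one group — so two distinct inflated blocks overlap only in a single fibre $\{b\}\times\{1,\dots,u\}$, which is a group of each fill and hence carries no covered pair from either — that a deleted transversal block of a type-$0$ fill never covered a pair inside a fibre, and that the $(mu+1,K)$-PBDs meet one another only in $\{\infty\}$. One should also confirm that the relabellings used to realise each $B\times\{j\}$ as a block of its TD$(\ell,u)$ can be chosen independently block by block without clashing on the shared fibres, which is fine since each relabelling only alters the internal labelling of a single TD$(\ell,u)$.
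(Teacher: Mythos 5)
Your construction is correct and is essentially the paper's own proof: truncate the last group of the TD$(\ell+1,m)$ to $t$ points, apply \textsc{WFC} with weights $u$ and $v$, prescribe the transversal $B\times\{j\}$ as a block of the TD$(\ell,u)$ placed on each block $B$ of the $j$-th chosen parallel class (possible since $u+t\le m$), fill the groups through a new point $\infty$ with $(mu+1,K)$-PBDs, and take the $mu$ transversals together with the $(tv+1)$-set as the groups. The only difference is presentational — you delete the designated blocks and verify the GDD conditions directly, whereas the paper first forms the PBD and then converts the identified parallel class into groups.
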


\begin{proof}
Start with a TD$(\ell+1,m)$ and delete $m-t$ points from the last group. 
This yields an $(\{\ell, \ell+1\},1)$-GDD of type $m^{\ell} t^1$. 
The blocks have sizes $\ell$ and $\ell+1$ and every block of size $\ell+1$ intersects the last group. Give every point in the first $\ell$ groups weight $u$, give every point in the last group weight $v$, and apply Construction \ref{WFC.const} (\textsc{WFC}). 
For a block of size $\ell$, we fill in a TD$(\ell,u)$.
For a block of size $\ell+1$, we fill in a $K$-GDD of type $u^{\ell} v^1$.

We now have a $K$-GDD of type $(mu)^{\ell} (tv)^1$.  Let $\infty$ be a new point.  
Replace every group $G$ of size $mu$ by an $(mu+1,K)$-PBD on $G \cup \{\infty\}$. Also, add $\infty$ to the last group. 
This produces an $(mu\ell + tv+1,K\cup \{tv+1\})$-PBD.

It remains to identify a parallel class of blocks in this PBD. The parallel class will consist of $mu$ blocks of size $\ell$ and the block of size $tv+1$.
Choose $u$ of the points that were deleted from the last group of the TD$(\ell +1, m)$ (note that $u \leq m-t$). These $u$ points induce $u$ classes of $m$ blocks of size $\ell$, each of which partitions the points in the first $\ell$ groups of the TD$(\ell +1, m)$. Denote these classes by $\P_i$, $1 \leq i \leq u$. 

When we apply \textsc{WFC}, we replace every point $x$ by a set of $u$ points, say $\{x\} \times \{1,\dots , u\}$.
Every block $B$ of size $\ell$ is replaced by the $u^2$ blocks in a TD$(\ell,u)$, in which the groups are
$\{x\} \times \{1,\dots , u\}$, $x \in B$. 
For all $B \in \P_i$ (where $1 \leq i \leq u$), we can stipulate that $B' = \{ (x,i) : x \in B\}$ is one of the blocks in the 
TD$(\ell,u)$ constructed from $B$. 

Now define 
\[ \P = \{ B' : B \in \P_i, 1 \leq i \leq u\}.\]
It is easily seen that $\P$ is a set of $mu$ blocks of size $\ell$ that form a partition of the first $\ell$ groups of the 
$K$-GDD of type $(mu)^{\ell} (tv)^1$. In the constructed PBD, there is a unique block $B_0$ of size $tv+1$ arising  from the last group of the TD$(\ell+1,m)$ together with  $\infty$. The blocks in $\P$, along with $B_0$, comprise the desired parallel class. This parallel class is taken to be the groups in a $K$-GDD of type $\ell^{ mu}{(tv+1)}^1$. \end{proof}

Here is a specific application of Theorem \ref{t1}.
\begin{corollary}
\label{cor2}
Suppose $m \equiv 0$ or $1 \bmod 5$, $m > 10$, 
and let $0 \leq t \leq m-4$.
Then there exists a $5$-GDD of type $5^{4m}{(4t+1)}^1$.
\end{corollary}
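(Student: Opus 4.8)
The plan is to apply Theorem~\ref{t1} with $K = \{5\}$, $\ell = 5$, $u = 4$, $v = 4$, and with the parameter called ``$m$'' in that theorem taken to be the present $m$. Under these choices the conclusion of Theorem~\ref{t1} is a $\{5\}$-GDD --- that is, a $5$-GDD --- of type $\ell^{mu}(tv+1)^1 = 5^{4m}(4t+1)^1$, valid for every $t$ with $0 \le t \le m - u = m - 4$. This is exactly the statement of the corollary, so all that remains is to check that the four hypotheses of Theorem~\ref{t1} hold for these parameters.

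Three of these are routine. Since $m > 10$ we have $u = 4 \le m$. A $\mathrm{TD}(\ell,u) = \mathrm{TD}(5,4)$ exists because $N(4) = 3$. We need $\ell = 5 \in K = \{5\}$, which is immediate, together with a $K$-GDD of type $u^{\ell}v^1$, i.e.\ a $5$-GDD of type $4^5\,4^1 = 4^6$; this is a standard small group-divisible design and exists (it can be constructed from a $(6,4,5,1)$-relative difference set in $\zed_{24}$, for instance; see \cite{CD}). Finally, since $m \equiv 0$ or $1 \pmod 5$ we get $4m+1 \equiv 1$ or $5 \pmod{20}$ respectively, so the necessary conditions for a $(4m+1,5,1)$-BIBD are satisfied; as these conditions are also sufficient \cite{CD}, the required $(mu+1,K) = (4m+1,\{5\})$-PBD exists.

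The remaining hypothesis is the existence of a $\mathrm{TD}(\ell+1,m) = \mathrm{TD}(6,m)$, equivalently $N(m) \ge 4$, and this is the only place where the restriction ``$m > 10$ and $m \equiv 0,1 \pmod 5$'' actually does any work. The excluded residues $m = 6$ and $m = 10$ are precisely the obstructed cases ($\mathrm{TD}(6,6)$ does not exist because $N(6) = 1$, and the existence of $\mathrm{TD}(6,10)$ is open), whereas for every $m \equiv 0,1 \pmod 5$ with $m > 10$ --- so $m \in \{11, 15, 16, 20, 21, 25, 26, 30, \dots\}$ --- it is known that $N(m) \ge 4$ \cite{CD}. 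I expect this last point to be the only nontrivial input: it comes down to reading off the existence of $\mathrm{TD}(6,m)$ from the MOLS tables for the small admissible values of $m$ and invoking the standard recursive and asymptotic results for the larger ones.
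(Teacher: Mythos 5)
Your proposal is correct and follows essentially the same route as the paper: apply Theorem~\ref{t1} with $\ell = 5$, $u = v = 4$, $K = \{5\}$, supplying a TD$(5,4)$, a $5$-GDD of type $4^6$, a TD$(6,m)$, and a $(4m+1,5,1)$-BIBD. The only (immaterial) differences are in how the small ingredients are sourced --- the paper gets the $5$-GDD of type $4^6$ from an affine plane of order $5$ minus a point and the TD$(5,4)$ from a projective plane of order $4$ minus a point, while you cite a relative difference set and $N(4)=3$ --- and your explicit identification of $m=6,10$ as the residues excluded by $m>10$ is a nice touch the paper leaves implicit.
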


\begin{proof}
We apply Theorem \ref{t1} with $\ell = 5$, $u = v = 4$ and $K = \{5\}$. A $K$-GDD of type $u^{\ell} v^1$ is just a
$5$-GDD of type $4^6$, which is obtained from an affine plane of order $5$ with a point deleted. A TD$(\ell,u)$ is obtained from a projective plane of order $4$ by deleting a point. 
A TD$(6,m)$ exists from \cite[\S III.3.6]{CD}. An $(mu+1,K)$-PBD is just a 
$(4m+1,5,1)$-BIBD, which exists because $m \equiv 0$ or $1 \bmod 5$ (see \cite[\S II.3.1]{CD}). We obtain a $5$-GDD of type $5^{4m}{(4t+1)}^1$.
\end{proof}

\begin{remark} {\rm The constructed PBD has blocks of size five and a block of size $4t+1$. Many results on such PBDs are known, e.g., see \cite[\S IV.1.2]{CD}. But there is apparently less information known on when such a PBD contains a parallel class that includes the block of size $4t+1$. }
\end{remark}

Here is one small numerical example to illustrate. 

\begin{example}
We construct  $5$-GDDs of type $5^{44}s^1$ for $s = 1,5, \dots , 29$. Take $m = 11$, let $t = 0,1, \dots , 7$ and  
apply Corollary \ref{cor2}.
\end{example}

We next observe that we can improve Theorem \ref{t1} if we have some information about the existence of disjoint blocks in the TD$(\ell,u)$.

\begin{theorem}
\label{t3}
Suppose there is a TD$(\ell+1,m)$. Suppose also that there is a TD$(\ell,u)$ containing $\alpha$ disjoint blocks, 
where $u \leq m$. Suppose $\ell \in K$ and suppose that there is a $K$-GDD of type $u^{\ell} v^1$. 
Finally, suppose there exists an $(mu+1,K)$-PBD. Then there exists a 
$K$-GDD of type $\ell^{mu}{(tv+1)}^1$ for all $t$ such that $0 \leq t \leq m - \lceil u / \alpha\rceil$. 
\end{theorem}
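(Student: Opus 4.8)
\section*{Proof proposal}

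The plan is to follow the proof of Theorem~\ref{t1} essentially verbatim, and to modify only the construction of the parallel class so as to exploit the $\alpha$ disjoint blocks of the TD$(\ell,u)$. As there, I would begin with a TD$(\ell+1,m)$, delete $m-t$ points from the last group to obtain an $(\{\ell,\ell+1\},1)$-GDD of type $m^{\ell}t^1$, and apply \textsc{WFC} with weight $u$ on the first $\ell$ groups and weight $v$ on the last, filling each block of size $\ell$ with a TD$(\ell,u)$ and each block of size $\ell+1$ with a $K$-GDD of type $u^{\ell}v^1$. This gives a $K$-GDD of type $(mu)^{\ell}(tv)^1$; adjoining a point $\infty$, filling each group of size $mu$ with an $(mu+1,K)$-PBD, and adding $\infty$ to the last group produces a PBD with a distinguished block $B_0$ of size $tv+1$, exactly as in Theorem~\ref{t1}.

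The improvement comes from observing how many ``layers'' a single deleted point can now cover. Recall that each point deleted from the last group of the TD$(\ell+1,m)$ induces a parallel class of $m$ blocks of size $\ell$ partitioning the first $\ell$ groups; in Theorem~\ref{t1} each such class was matched to a single layer of the blow-up $\{x\}\times\{1,\dots,u\}$, which forced $u$ deleted points to be used. Instead, set $d=\lceil u/\alpha\rceil$, pick $d$ deleted points with associated parallel classes $\P_1,\dots,\P_d$ (this requires $d\le m-t$, i.e.\ $t\le m-\lceil u/\alpha\rceil$), and partition the layer set $\{1,\dots,u\}$ into sets $L_1,\dots,L_d$ with $|L_j|\le\alpha$ for each $j$. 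For each $B\in\P_j$, when filling $B$ with a TD$(\ell,u)$ I am free to choose how its $\ell$ groups are identified, point by point, with the blow-ups $\{x\}\times\{1,\dots,u\}$, $x\in B$; using this freedom I would arrange that $|L_j|$ of its $\alpha$ disjoint blocks become the transversals $\{(x,k):x\in B\}$, one for each $k\in L_j$ --- possible precisely because the $\alpha$ disjoint blocks meet each group of the TD$(\ell,u)$ in $\alpha$ distinct points. Letting $\P$ be the union over all $j$ and all $B\in\P_j$ of these harvested transversals, one checks that $\P$ is a parallel class: a point $(x,k)$ with $x$ in the first $\ell$ groups lies in exactly one $\P_j$ (the one with $k\in L_j$), in the unique block of $\P_j$ through $x$, and in exactly the harvested transversal of that block indexed by $k$. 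Thus $\P$ has $\sum_j m|L_j|=mu$ blocks of size $\ell$ and partitions the first $\ell$ groups of the $K$-GDD of type $(mu)^{\ell}(tv)^1$, and $\P\cup\{B_0\}$ is the desired parallel class, taken as the groups of a $K$-GDD of type $\ell^{mu}(tv+1)^1$.

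I expect the only step needing real care to be the bookkeeping that $\P$ is genuinely a partition --- that with $|L_j|\le\alpha$ and $L_1,\dots,L_d$ a partition of $\{1,\dots,u\}$, every blown-up point of the first $\ell$ groups is covered exactly once --- together with the (routine) observation that the identifications of the filled-in transversal designs may be prescribed block by block without conflict, since \textsc{WFC} fills each block with its own independent copy of a GDD. Everything else is identical to the proof of Theorem~\ref{t1}, and the case $\alpha=1$ recovers it.
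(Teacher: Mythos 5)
Your proposal is correct and follows essentially the same route as the paper: the same PBD construction as in Theorem~\ref{t1}, followed by partitioning the layer set $\{1,\dots,u\}$ into $\lceil u/\alpha\rceil$ parts of size at most $\alpha$ and matching each part to one deleted point's parallel class via the $\alpha$ disjoint blocks of the TD$(\ell,u)$. Your explicit justification that the identifications can be prescribed block by block (each filled-in TD being independent) is a point the paper merely stipulates, but the argument is the same.
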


\begin{proof}
The construction of the $(mu\ell + tv+1,\{K, (tv+1)^*)$-PBD is the same as in the proof of Theorem \ref{t1}. However, we construct the parallel class of blocks of size $\ell$ slightly differently. Choose $\lceil u / \alpha\rceil$ of the points that were deleted from the last group of the TD$(\ell +1, m)$ (note that $\lceil u / \alpha\rceil \leq m-t$). These $\lceil u / \alpha\rceil$ points induce 
$\lceil u / \alpha\rceil$ classes of $m$ blocks of size $\ell$, each of which partitions the points in the first $\ell$ groups of the TD$(\ell +1, m)$. Denote these classes by $\P_i$, $1 \leq i \leq \lceil u / \alpha\rceil$. 

When we apply \textsc{WFC}, we replace every point $x$ by a set of $u$ points, say $\{x\} \times \{1,\dots , u\}$. 
Every block $B \in \P_i$ is replaced by the $u^2$ blocks in a TD$(\ell,u)$ in which the groups are
$\{x\} \times \{1,\dots , u\}$, $x \in B$. 

Partition the set $\{1,\dots , u\}$ into $\lceil u / \alpha\rceil$ disjoint sets, say 
$T_1, \dots , T_{\lceil u /\alpha\rceil}$, each of size at most $\alpha$.
Each TD$(\ell,u)$ contains $\alpha$ disjoint blocks. For $1 \leq i \leq \lceil u / \alpha\rceil$, for each $B \in \P_i$ and every 
$j \in T_i$,
define $B'_j = \{ (x,j) : x \in B\}$. 
We can stipulate that the $\alpha$ blocks $B'_j$ (for all $j \in T_i$) are blocks in the TD$(\ell,u)$ constructed from $B$.

Now define 
\[ \P = \{ B'_j : B \in \P_i, 1 \leq i \leq \lceil u /\alpha\rceil, j \in T_i\}.\]
It is easily seen that $\P$ is a set of $m u$ blocks of size $\ell$ that form a partition of the first $\ell$ groups of the 
$K$-GDD of type $(mu)^{\ell} (tv)^1$. The rest of the construction proceeds as before.
\end{proof}

\begin{remark} {\rm The advantage of  Theorem \ref{t3} (with $\alpha > 1$) as compared to Theorem \ref{t1} is that we can take larger values of $t$ in Theorem \ref{t3}.}
\end{remark}

In order to apply Theorem \ref{t3}, we  need to know something about disjoint blocks in a TD$(\ell,u)$.
This problem has been  studied extensively in the case $\ell = 3$, where a set of disjoint blocks is a partial transversal of the associated latin square. See Wanless \cite{Wan} for a survey of results on this problem.
We briefly mention a few general results for arbitrary $\ell$ that are well-known and/or follow from elementary counting arguments.


\begin{lemma}
\label{l4}
\mbox{\quad} \vspace{-.2in}\\
\begin{enumerate}
\item If there is a TD$(\ell,u)$, then $\ell \leq u+1$.
\item A TD$(u+1,u)$ does not contain two disjoint blocks.
\item A TD$(u,u)$ contains $u$ disjoint blocks.
\item If there is a TD$(\ell+1,u)$, then there is a TD$(\ell,u)$ that contains $u$ disjoint blocks.
\item A TD$(\ell,u)$ with $u \geq \ell \geq 2$ contains at least three disjoint blocks, unless $u = \ell = 2$.
\item A TD$(\ell,u)$ contains at least $\left\lceil \frac{u^2}{\ell (u-1) + 1} \right\rceil $ disjoint blocks.
\end{enumerate}
\end{lemma}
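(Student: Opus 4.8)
The plan is to base everything on one elementary double count in a TD$(\ell,u)$: two distinct blocks meet in at most one point, and each of the $\ell$ points of a fixed block $B$ lies in exactly $u$ blocks, so $B$ meets exactly $\ell(u-1)$ blocks other than itself and is disjoint from exactly $u^2-1-\ell(u-1)=(u-1)(u+1-\ell)$ of the remaining blocks. Item (1) is the classical bound $N(u)\le u-1$ on mutually orthogonal latin squares; alternatively, for $u\ge 2$ it is forced by the above count, since $(u-1)(u+1-\ell)\ge 0$ gives $\ell\le u+1$. Item (2) is then immediate, because for $\ell=u+1$ the number $(u-1)(u+1-\ell)$ is zero, so no block of a TD$(u+1,u)$ avoids any other.

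For item (4) I would argue constructively. Take a TD$(\ell+1,u)$ with groups $H_1,\dots,H_{\ell+1}$ and delete the group $H_{\ell+1}$; the restrictions of the blocks to $H_1\cup\cdots\cup H_\ell$ form a TD$(\ell,u)$. Fixing a point $p\in H_{\ell+1}$, the $u$ blocks of the TD$(\ell+1,u)$ through $p$ pairwise intersect only in $p$, hence after deleting $p$ they become $u$ pairwise disjoint blocks of the TD$(\ell,u)$. Item (3) then follows by specializing $\ell=u$ in item (4), using the well-known fact (a consequence of Bruck's net-completion theorem; see \cite{CD}) that a TD$(u,u)$ exists precisely when a TD$(u+1,u)$ does; equivalently, one can take the $u-2$ MOLS underlying the TD$(u,u)$, extend them to a complete set of $u-1$ MOLS, and note that the ``lines'' of the extra square partition the cells into $u$ common transversals of the original squares, i.e.\ into $u$ disjoint blocks.

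Item (6) is a greedy covering estimate. Choose a \emph{maximal} set $\{B_1,\dots,B_s\}$ of pairwise disjoint blocks. By maximality every block meets some $B_i$, and each $B_i$ meets at most $\ell(u-1)+1$ blocks in total (itself, plus the $\ell(u-1)$ others counted above); hence $s(\ell(u-1)+1)\ge u^2$, and since $s$ is an integer, $s\ge\lceil u^2/(\ell(u-1)+1)\rceil$.

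Item (5) is the part that needs genuine work, since one wants three \emph{pairwise} disjoint blocks rather than three blocks disjoint from a single one. First pick any $B_1$; as $u\ge\ell$, it is disjoint from $(u-1)(u+1-\ell)\ge u-1\ge 1$ other blocks, so a $B_2$ disjoint from $B_1$ exists. The key step is to count the blocks disjoint from \emph{both} $B_1$ and $B_2$. A block $C\notin\{B_1,B_2\}$ that meets both must meet each in exactly one point, and those two points lie in two different groups $H_a,H_b$; conversely, for any ordered pair of distinct groups $(H_a,H_b)$ the unique block through $B_1\cap H_a$ and $B_2\cap H_b$ is such a $C$, and $C\mapsto(H_a,H_b)$ is a bijection, giving $\ell(\ell-1)$ blocks that meet both. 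Inclusion--exclusion on ``meets $B_1$ or meets $B_2$'' then yields that the number of blocks disjoint from both is $u^2-2(\ell(u-1)+1)+\ell(\ell-1)=(u-\ell)^2+\ell-2$. This is at least $1$ when $\ell\ge 3$, and also when $\ell=2$ and $u\ge 3$; it vanishes only when $u=\ell=2$, which is exactly the excluded case. Taking such a block to be $B_3$ finishes the proof. I expect this inclusion--exclusion count — in particular checking that ``meets both'' really is in bijection with ordered pairs of distinct groups, and that the resulting quantity stays positive off the exceptional case — to be the main obstacle; the remaining items all reduce quickly to the single double count stated at the outset.
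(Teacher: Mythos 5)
Your proposal is correct and, for the two parts the paper actually proves (items 5 and 6), it follows essentially the same counting arguments: your inclusion--exclusion count of blocks disjoint from both $B_1$ and $B_2$ yields the same quantity $(u-\ell)^2+\ell-2$ as the paper's $f(u)$, and your covering bound $s(\ell(u-1)+1)\ge u^2$ is algebraically identical to the paper's averaging inequality $r\ell(u-1)\ge u^2-r$. The paper simply cites items 1--4 as well known, so your sketches there (including the appeal to the completion theorem for item 3) are a harmless elaboration.
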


\begin{proof}
Parts 1.--4.\ are well-known, so we only provide a proof of parts 5 and 6. 
First we prove part 5. A block $B$ in a TD$(\ell,u)$ intersects $\ell (u-1)$ other blocks. 
There are $u^2$ blocks. Hence, there exists a block disjoint from
$B$ if and only if $u^2 > 1 + \ell(u-1)$, or $\ell < u + 1$. Since $\ell \leq u$, there are at least two disjoint blocks.
Now, assume that $B_1$ and $B_2$ are disjoint blocks. There are $\ell(\ell-1)$ blocks that intersect both $B_1$ and $B_2$.
Since every point is in $u$ blocks, there are $2\ell(u - \ell)$ blocks that contain exactly one point from $B_1 \cup B_2$. 
It follows that there is a block disjoint from both $B_1$ and $B_2$ if and only if
$u^2 - 2\ell(u - \ell)- \ell(\ell-1) - 2 > 0$. Fix $\ell$ and define \[f(u) = u^2 - 2\ell(u - \ell)- \ell(\ell-1) - 2.\]
We have $f'(u) = 2(u - \ell) \geq 0$ if $u \geq \ell$. Also, $f(\ell) = \ell - 2 > 0$ since $\ell > 2$. It follows that
$f(u) > 0$ for all $u \geq \ell$ when $\ell \geq 3$. When $\ell = 2$, we have $f(u) = (u-2)^2$, so $f(u) > 0$ if and only if $u > 2$.
This establishes the existence of three disjoint blocks in the TD, unless $u = \ell = 2$.

To prove part 6, let $B_1, \dots , B_r$ be a maximal set of $r$ disjoint blocks in a TD$(\ell,u)$.  
Denote $Y = \bigcup_{i = 1}^{r} B_i$. 
Since we started with a maximal set of disjoint blocks, there is no block disjoint from $Y$.
Denote the set of $u^2 - r$ blocks other than $B_1, \dots , B_r$ by $\B'$.
For $B \in \B'$, define $a_B = | B \cap Y|$. By assumption, $a_B \geq 1$ for all $B \in \B'$. We have
\[ \sum_{B \in \B'} a_B = r\ell(u-1).\]
Therefore the mean of the $a_B$'s is 
\[\overline{a} = \frac{r\ell(u-1)}{u^2 - r}.\]
Since $\overline{a} \geq 1$, we have 
\[\frac{r\ell(u-1)}{u^2 - r} \geq 1,\]
or  \[ r\ell(u-1) \geq u^2 - r.\]
Consequently, \[r \geq \frac{u^2}{\ell (u-1) + 1}.\]
Hence, the TD$(\ell,u)$ contains at least $\left\lceil \frac{u^2}{\ell (u-1) + 1} \right\rceil $ disjoint blocks.
\end{proof}


\begin{remarks}
{\rm \quad \vspace{-.22in}\\
\begin{enumerate}
\item Wilson's 1974 construction for mutually orthogonal latin squares \cite{W} explicitly makes use of transversal designs containing disjoint blocks. Also, it is observed in  \cite{W} that a TD$(\ell,u)$ contains at least two disjoint blocks if $\ell \leq u$. 
\item We observe that Corollary \ref{cor2} is a special case of Theorem \ref{t3} in which $\alpha = 1$. In view of Lemma \ref{l4}, we cannot take $\alpha > 1$ in this case because $\ell = u+1$.
\item The bound proven in part  6 also follows from a more general result due to Rosenfeld \cite[Theorem I b)]{Rosenfeld}.
\end{enumerate}
}
\end{remarks}

Here is an example of an application of Theorem \ref{t3} with $\alpha = u$.

\begin{corollary}
\label{cor5}
Suppose exists a TD$(8,m)$ and a $(7m+1,7,1)$-BIBD and let $0 \leq t \leq m-1$.
Then there exists a $\{7,8\}$-GDD of type $7^{7m}{(7t+1)}^1$.
\end{corollary}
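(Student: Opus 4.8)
The plan is to apply Theorem~\ref{t3} directly with a specific choice of parameters, so the main task is to verify each of its hypotheses. I would set $\ell = 7$, $u = v = 7$, $\alpha = u = 7$, and $K = \{7,8\}$. With these choices the conclusion of Theorem~\ref{t3} is a $K$-GDD of type $7^{7m} (7t+1)^1 = \{7,8\}$-GDD of type $7^{7m}(7t+1)^1$, and the range of admissible $t$ becomes $0 \leq t \leq m - \lceil u/\alpha \rceil = m - \lceil 7/7 \rceil = m-1$, which is exactly the range stated in the corollary. So the structure of the proof is: choose these parameters, check the four ingredients Theorem~\ref{t3} requires, and invoke it.

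The ingredients to verify are as follows. First, a TD$(\ell+1,m) = $ TD$(8,m)$ is assumed to exist in the hypothesis of the corollary. Second, a TD$(\ell,u) = $ TD$(7,7)$ containing $\alpha = 7$ disjoint blocks: this is a TD$(u,u)$ with $u = 7$, which by Lemma~\ref{l4}(3) contains $u = 7$ disjoint blocks. Third, $\ell = 7 \in K = \{7,8\}$, and a $K$-GDD of type $u^\ell v^1 = 7^7 7^1 = 7^8$: a $7$-GDD of type $7^8$ is a TD$(8,7)$, which exists since $7$ is a prime power (equivalently, it comes from an affine plane of order~$7$), and a $7$-GDD is in particular a $\{7,8\}$-GDD. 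Fourth, an $(mu+1,K)$-PBD $= (7m+1,\{7,8\})$-PBD: a $(7m+1,7,1)$-BIBD is assumed to exist, and it is a fortiori a $(7m+1,\{7,8\})$-PBD.

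With all four hypotheses in hand, Theorem~\ref{t3} yields a $\{7,8\}$-GDD of type $7^{7m}(7t+1)^1$ for every $t$ with $0 \leq t \leq m-1$, which is the assertion of the corollary. I do not anticipate any genuine obstacle here: the only mildly delicate point is the bookkeeping of the $u \leq m$ condition required by Theorem~\ref{t3}, i.e.\ that $7 \leq m$ --- but this is automatic, since a TD$(8,m)$ is assumed to exist and Lemma~\ref{l4}(1) gives $\ell + 1 = 8 \leq m+1$, hence $m \geq 7$. Everything else is a matter of recognizing standard equivalences (TD$(k,m)$ $\leftrightarrow$ $k-2$ MOLS, affine plane of order~$7$ $\to$ TD$(8,7)$, $(v,k,1)$-BIBD as a PBD) that are already recorded in the excerpt.
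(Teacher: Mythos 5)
Your proof is correct and follows essentially the same route as the paper: apply Theorem~\ref{t3} with $\ell = u = v = \alpha = 7$ and $K = \{7,8\}$, sourcing the TD$(7,7)$ with $7$ disjoint blocks and the $K$-GDD of type $7^8$ from the planes of order $7$. One small slip worth fixing: a TD$(8,7)$ is an $8$-GDD of type $7^8$, not a $7$-GDD (a $7$-GDD of type $7^8$ cannot exist, by counting pairs); the argument is unaffected, since an $8$-GDD of type $7^8$ is already the required $\{7,8\}$-GDD of type $u^{\ell}v^1$.
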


\begin{proof}
We apply Theorem \ref{t3} with $\ell = u = v = \alpha = 7$ and $K = \{7,8\}$. A projective  plane of order $7$ with a point deleted yields an $8$-GDD of type $7^8$, which is a $K$-GDD of type $u^{\ell} v^1$. 
A TD$(\ell,u)$ with $\alpha$ disjoint blocks is obtained from a affine plane of order $7$ (one parallel class yields the groups and a second parallel class yields
$\alpha$ disjoint blocks). 
A $(7m+1,7,1)$-BIBD, which exists by hypothesis, is an $(mu+1,K)$-PBD. We obtain a $\{7,8\}$-GDD of type $7^{7m}{(7t+1)}^1$.
\end{proof}

\begin{remark}
{\rm It is known (see \cite[\S II.3.1]{CD}) that $(7m+1,7,1)$-BIBDs exist for $m \equiv 0 \text{ or } 1 \bmod 6$, $m > 372$. Also, $TD(8,m)$ are known to exist for all $m > 74$. So Corollary \ref{cor5} can be applied for all $m \equiv 0 \text{ or } 1 \bmod 6$, $m > 372$.}
\end{remark}

\section*{Acknowledgement}
Thanks to Ian Wanless for bringing Rosenfeld's paper \cite{Rosenfeld} to my attention and thanks to Julian Abel for helpful comments.


\begin{thebibliography}{10}

\bibitem{CD}
C.J. Colbourn and J.H. Dinitz. {\em Handbook of Combinatorial Designs, Second Edition}, Chapman \& Hall/CRC, 2007.

\bibitem{Rosenfeld}
M.\ Rosenfeld. Independent sets in regular graphs.
\emph{Israel Journal of Mathematics} {\bf 2} (1964), 262--272. 

\bibitem{Wan}
I. Wanless. Transversals in latin squares: a survey. In: \emph{Surveys in Combinatorics 2011}, R.\ Chapman, ed. London Mathematical Society Lecture Note Series, Cambridge University Press, 2011, pp.\ 403--437. 


\bibitem{W}
R.M. Wilson. Concerning the number of mutually orthogonal latin squares. 
\emph{Discrete Math.} {\bf 9} (1974), 181--198.

\end{thebibliography}
\end{document}